\font\smallit=cmti10  
\renewcommand\section{\@startsection {section}{1}{\z@}
{-30pt \@plus -1ex \@minus -.2ex} {2.3ex \@plus.2ex}
{\normalfont\normalsize\bfseries}}
\renewcommand\subsection{\@startsection{subsection}{2}{\z@}
{-3.25ex\@plus -1ex \@minus -.2ex} {1.5ex \@plus .2ex}
{\normalfont\normalsize\bfseries}}
\renewcommand{\@seccntformat}[1]{\csname the#1\endcsname. }
\newtheorem{theorem}{Theorem}[section]
\newtheorem{lemma}[theorem]{Lemma}
\newtheorem{corollary}[theorem]{Corollary}
\newenvironment{proof}
   {\vskip 0.15in \par\noindent{\emph{Proof}.}\hskip 0.5em\ignorespaces}
   {\hfill $\Box$\par\medskip}
\begin{document}

\begin{center}
{\bf THE $n$-COLOR PARTITION FUNCTION AND SOME COUNTING THEOREMS} \vskip 20pt
{\bf Subhajit Bandyopadhyay}\\
{\smallit Department of Mathematical Sciences, Tezpur University, Napaam-784028, Sonitpur, Assam, India}\\
{\tt subha@tezu.ernet.in}\\  \vskip 10pt
{\bf Nayandeep Deka Baruah}\\
{\smallit Department of Mathematical Sciences, Tezpur University, Napaam-784028, Sonitpur, Assam, India}\\
{\tt nayan@tezu.ernet.in}\\
\end{center}
\vskip 30pt

\centerline{\textbf{Abstract}}
Recently, Merca and Schmidt found some decompositions for the partition function $p(n)$ in terms of the classical M\"{o}bius function as well as Euler's totient. In this paper, we define a counting function $T_k^r(m)$ on the set of $n$-color partitions of $m$ for given positive integers $k, r$ and  relate the function with the $n$-color partition function and other well-known arithmetic functions like the M\"obius function, Liouville function, etc. and their divisor sums. Furthermore, we use a counting method of Erd\"os to obtain some  counting theorems for $n$-color partitions that are analogous to those found by Andrews and Deutsch for the partition function.

\pagestyle{myheadings}

\thispagestyle{empty}

\baselineskip=12.875pt

\vskip 30pt

\vspace*{-\baselineskip}

\small

\section{\textbf{Introduction}}\label{secone}  A \emph{partition}
$\lambda=(\lambda_1,\lambda_2,\cdots,\lambda_k)$ of a positive integer
$m$ is a finite sequence of non-increasing positive integers $\lambda_i$, called parts, such that $m=\sum_{i=1}^k\lambda_i$. The partition function $p(m)$ is the number of partitions of $m$. For example, the partitions of $4$ are $(4), ~(3,1),~ (2,2),~
(2,1,1),~ ~ (1,1,1,1)$, and hence, $p(4)=5$.

An $n$-color partition (also called a partition with ``$n$ copies of $n$") of a positive integer $m$ is a partition in which a part of size $n$ can appear in $n$ different colors denoted by subscripts in $n_1,n_2,\ldots, n_n$ and the parts satisfy the order: $$1_1<2_1<2_2<3_1<3_2<3_3<4_1<4_2<4_3<4_4<\cdots.$$
Let  $\textup{PL}(m)$ denote the number of $n$-color partitions of $m$. For example, $\textup{PL}(4)=13$ since there are 13 $n$-color partitions of 4, namely, $(4_1), (4_2), (4_3), (4_4), (3_1,1_1),(3_2,1_1), (3_3,1_1)$, $(2_1,2_1), (2_2,2_1), (2_2,2_2), (2_1,1_1,1_1), (2_2,1_1,1_1)$, and $(1_1,1_1,1_1,1_1)$. The generating  function of $\textup{PL}(m)$  is given by
\begin{align}\label{gen}
\sum_{m=0}^{\infty} \textup{PL}(m)q^m = \prod_{m=1}^{\infty} \dfrac{1}{(1-q^m)^m}.
\end{align}
MacMahon \cite[p. 1421]{macmahon} observed that the right side of \eqref{gen} also generates the number of plane partitions of $m$ (Also see \cite[Corollary 7.20.3]{stanley}), where  a plane partition $\pi$ of $m$ is an array of non-negative integers, $$\begin{array}{cccc}
                                                                        m_{11} & m_{12} & m_{13} & \cdots \\
                                                                        m_{21} & m_{22} & m_{23} & \cdots \\
                                                                        \vdots & \vdots & \vdots &
                                                                      \end{array}
$$
such that $\sum m_{ij}=m$ and the rows and columns are in decreasing order, that is, \linebreak $m_{ij}\ge m_{(i+1)j}$, $m_{ij}\ge m_{i(j+1)}$, for all $i,j\ge1$. For example, the plane partitions of $4$ are

$$\begin{array}{cccccccccccccccccccccccccccccccc}
    4, &  &  & 31,&   &  & 3, &  &  & 2 2&  &  & 2, &  &  & 2  1 1, &  &  & 2  1, &  &  & 2, \\
       &  &  &   &    &  &  1 &  &  &    &  &  & 2  &  &  &         &  &  &  1    &  &  & 1  \\
       &  &  &   &    &  &    &  &  &    &  &  &    &  &  &         &  &  &       &  &  & 1\\
\\  &&  & 111, &   &    & 111, &    &  & 11, &    &  & 11,&    &  &  1. &         &  &  &       &  &  \\
    &&  &      &   &    & \quad1 &    &  & 1 &    &  & 11 &    &  &  1 &         &  &  &       &  &  \\
 &&  &  &   &    & &    &  & 1 &    &  & &    &  &  1 &         &  &  &       &  &  \\
 &&  &  &   &    & &    &  &  &    &  & &    &  &  1 &         &  &  &       &  &  \\
     \end{array}
$$

For further reading on $n$-color partitions and plane partitions we refer to \cite{agarwal-andrews, macmahon, stanley1, stanley2}.

Recently,  Merca and Schmidt \cite{merca-schmidt-mob, merca-schmidt-stan} found some decompositions for the partition function $p(m)$ in terms of the classical elementary functions, namely, the M\"{o}bius function and Euler's totient. In this paper, we find some connections between $n$-color partition function (equivalently, the plane partition function) $\textup{PL(m)}$ and elementary arithmetic functions and their divisor sums.

We define an associated function $T^{r}_{k}(n)$ in two separate scenarios:
\begin{enumerate}
\item
For $r \leq k$, $T^{r}_{k}(n)$ = $\dfrac{1}{k} \times$  (the number of $k$'s in the $n$-color partitions of $n$ with the smallest part at least $r$).
\item
For $r>k$, $T^{r}_{k}(n)$ = the number of the $n$-color partitions of $n-k$ with the smallest part at least $r$ except the possibility of the part $k<r$ being present in only one color.
\end{enumerate}

We consider the following three examples. First we consider $T^{2}_{3}(5)$. We note that the number of $3$'s in the $n$-color partitions of $5$ with the smallest part at least $2$ is $6$, which is evident from the relevant partitions
$3_1+2_1, 3_2+2_2, 3_3+2_1, 3_1+2_2, 3_2+2_1$ and $3_3+2_2$. Therefore, $T^{2}_{3}(5)= \dfrac{1}{3} \times 6 = 2$.

Next, we consider $T^{3}_{2}(5)$. Here $n-k = 5-2 = 3$. The $n$-color partitions of 3 with the smallest part at least 3 except the possibility of the part 2 being present in only one color are $3_1, 3_2$ and $3_3$. Hence, $T^{3}_{2}(5)=3$.

Finally, we consider $T^{3}_{2}(7)$. In this case, $n-k = 7-2 = 5$ and the $n$-color partitions of 5 with the smallest part at least 3 except the possibility of the part 2 being present in only one color are $5_1,  5_2 , 5_3,  5_4, 5_5, 3_1+2, 3_2+2$, $3_3+2$. Thus, $T^{3}_{2}(7) = 8$.

The generating function of $T^{r}_{k}(m)$ is given in the  following lemma.
\begin{lemma}\label{lemma} We have
\begin{align*}
\sum_{m=k}^{\infty} T^{r}_{k}(m)q^m = \frac{q^k}{1-q^k} \prod_{m=r}^{\infty} \frac{1}{(1-q^m)^m}.
\end{align*}
\end{lemma}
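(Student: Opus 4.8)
The plan is to treat the two cases in the definition separately and verify that both lead to the same right-hand side. Throughout I write $P_r(N)$ for the number of $n$-color partitions of $N$ whose smallest part is at least $r$. Reasoning as for \eqref{gen} but simply omitting the factors corresponding to parts of size $1,\dots,r-1$, its generating function is $\sum_{N\ge0}P_r(N)q^N=\prod_{m\ge r}(1-q^m)^{-m}$, each part of size $m$ contributing the $m$ independent colour factors $(1-q^m)^{-1}$.

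For $r\le k$, I first observe that $k\,T^r_k(n)$ is exactly the total number of parts equal to $k$, counted over all $k$ colours and with multiplicity, occurring in the $n$-color partitions of $n$ with smallest part at least $r$. To compute this I would attach a marking variable $z$ recording each occurrence of a part of size $k$, so that the refined generating function becomes $(1-zq^k)^{-k}\prod_{m\ge r,\,m\ne k}(1-q^m)^{-m}$, whose coefficient of $z^jq^n$ counts the relevant partitions having precisely $j$ parts of size $k$. Differentiating in $z$ and setting $z=1$ extracts $\sum_j j(\cdots)$, namely the desired total; since $\partial_z(1-zq^k)^{-k}\big|_{z=1}=kq^k(1-q^k)^{-k-1}$, recombining the freed factor $(1-q^k)^{-k}$ with the product over $m\ne k$ collapses everything to $\dfrac{k\,q^k}{1-q^k}\prod_{m\ge r}(1-q^m)^{-m}$. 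Dividing by $k$ gives the claim.

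For $r>k$, I would instead read the definition directly as a generating-function statement. A partition enumerated by $T^r_k(n)$ consists of a partition of $n-k$ whose parts are all at least $r$, together with an arbitrary number (possibly none) of copies of the part $k$ confined to a single colour; these two ingredients contribute $\prod_{m\ge r}(1-q^m)^{-m}$ and the single geometric factor $(1-q^k)^{-1}$ respectively. Multiplying by $q^{k}$ to restore the weight from $n-k$ to $n$ yields $\sum_n T^r_k(n)q^n=\dfrac{q^{k}}{1-q^k}\prod_{m\ge r}(1-q^m)^{-m}$, again the asserted formula.

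I expect the only genuine subtlety to arise in the first case. The tempting shortcut—delete one distinguished $k$-part, remember its colour, and pair the result with an element of the $P_r(n-k)$ partitions—is \emph{not} a bijection, since two copies of the same coloured part $k_c$ become indistinguishable after deletion, so it undercounts partitions carrying repeated equal-coloured $k$'s. The marking-and-differentiation device sidesteps this automatically; equivalently, one may write the total number of $k$'s as $\sum_{c=1}^{k}\sum_{j\ge1}\#\{\pi:\pi\text{ contains at least }j\text{ parts }k_c\}=k\sum_{j\ge1}P_r(n-jk)$, whose generating function is identical.
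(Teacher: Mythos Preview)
Your proof is correct and follows essentially the same route as the paper: in the case $r\le k$ the paper also attaches a marking variable to the part $k$, differentiates, and evaluates at $1$, while in the case $r>k$ it reads off the generating function directly from the definition and shifts by $q^k$. Your closing paragraph on why the naive ``delete one $k$'' bijection fails and the equivalent identity $k\sum_{j\ge1}P_r(n-jk)$ is extra commentary not present in the paper, but it is correct and does not alter the argument.
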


We have the following main theorem that connects $\textup{PL}(m)$,  $T_k^r(m)$, and elementary arithmetic functions.

\begin{theorem}\label{main}
Let $A(m)$ be an arithmetic function for $m \geqslant 1$ and $B(m)$ be its divisor sum, that is,
\begin{align*}
B(m)=\sum_{d|m} A(d).
\end{align*}
Also, define the functions $\ell_{r}(m)$ for $m\geqslant0$ and $r\geqslant1$ recursively as
\begin{align}
\ell_{1}(m)&=1,\notag\\\intertext{and}
\label{lr_def}
\ell_{r}(m)&= \sum_{k=0}^{\left \lfloor m/r \right \rfloor} \binom{r+k-1}{k} \ell_{r-1}(m-kr), \text{ for }r\geqslant2.
\end{align}
Additionally, we set
\begin{align*}
\ell_{0}(m)=
\begin{cases}
	1, &\text{if }m=0,\\
	0, &\text{for }m\geqslant1.\\
\end{cases}
\end{align*}
Then for $m \geqslant 1$ and $1 \leqslant r \leqslant m$, we have
\begin{align}
\sum_{k=1}^{m}\textup{PL}(m-k)B(k) = \sum_{k=1}^{m} \sum_{j=0}^{m-k} A(k) T_{k}^{r}(m-j) \ell_{r-1}(j). \label{main1}
\end{align}
\end{theorem}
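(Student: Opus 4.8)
The plan is to turn both sides of \eqref{main1} into coefficient extractions from generating functions and then reduce the whole identity to a single product formula for the generating series of $\ell_{r-1}$. Write $P(q)=\sum_{m\geqslant0}\textup{PL}(m)q^m=\prod_{n\geqslant1}(1-q^n)^{-n}$, and attach to the divisor sum $B$ its Lambert series
$$\mathcal{B}(q)=\sum_{m\geqslant1}B(m)q^m=\sum_{k\geqslant1}A(k)\frac{q^k}{1-q^k},$$
the second equality being the standard rearrangement of $B(m)=\sum_{d\mid m}A(d)$ obtained by setting $m=ke$ and summing the geometric series in $e$. With this notation the left-hand side of \eqref{main1} is precisely $[q^m]\,P(q)\mathcal{B}(q)$, since it is the degree-$m$ coefficient of the Cauchy product of $P$ and $\mathcal{B}$.

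For the right-hand side I would fix $k$ and read the inner sum $\sum_{j=0}^{m-k}T_k^r(m-j)\ell_{r-1}(j)$ as the coefficient of $q^m$ in the Cauchy product of $\sum_i T_k^r(i)q^i$ with $L_{s}(q):=\sum_{j\geqslant0}\ell_{s}(j)q^j$, taken at $s=r-1$. Invoking Lemma~\ref{lemma} to replace $\sum_i T_k^r(i)q^i$ by $\frac{q^k}{1-q^k}\prod_{n\geqslant r}(1-q^n)^{-n}$, then summing against $A(k)$ and pulling the $k$-independent factors outside, the right-hand side becomes
$$[q^m]\,\Bigl(\sum_{k\geqslant1}A(k)\frac{q^k}{1-q^k}\Bigr)\,L_{r-1}(q)\prod_{n\geqslant r}\frac{1}{(1-q^n)^n}=[q^m]\,\mathcal{B}(q)\,L_{r-1}(q)\prod_{n\geqslant r}\frac{1}{(1-q^n)^n}.$$
Thus the same Lambert series $\mathcal{B}(q)$ factors out of both sides, and \eqref{main1} reduces to verifying the factorization $P(q)=L_{r-1}(q)\prod_{n\geqslant r}(1-q^n)^{-n}$, that is, to the claim that $L_{s}(q)=\prod_{m=1}^{s}(1-q^m)^{-m}$ for $s=r-1$.

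The final step, which I expect to be the crux, is to prove this product formula for $L_s(q)$ for every $s\geqslant0$ by induction on $s$. The base cases $L_0(q)=1$ and $L_1(q)=(1-q)^{-1}$ are immediate from the definitions of $\ell_0$ and $\ell_1$. For the inductive step I would feed the recursion \eqref{lr_def} into $L_r(q)$: substituting $m=kr+n$ separates the double sum as
$$L_r(q)=\Bigl(\sum_{k\geqslant0}\binom{r+k-1}{k}q^{kr}\Bigr)L_{r-1}(q),$$
and the negative binomial theorem $\sum_{k\geqslant0}\binom{r+k-1}{k}x^k=(1-x)^{-r}$ with $x=q^r$ collapses the first factor to $(1-q^r)^{-r}$, giving $L_r(q)=(1-q^r)^{-r}L_{r-1}(q)$; the induction hypothesis then yields the product. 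The only genuine care required throughout is bookkeeping: every manipulation is an identity of formal power series in $q$, and one must check that the finite ranges in \eqref{main1} coincide exactly with the nonzero coefficients picked out by $[q^m]$ — in particular $T_k^r(i)=0$ for $i<k$ and $\ell_{r-1}(j)=0$ for $j<0$ — so that extending all the sums to infinity is harmless. Specializing the product formula to $s=r-1$ completes the proof.
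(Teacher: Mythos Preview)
Your proof is correct and follows essentially the same approach as the paper: both arguments compute the product $P(q)\mathcal{B}(q)$ in two ways, invoking Lemma~\ref{lemma} and the Lambert-series identity for $B$, and both reduce the matter to the product formula $L_s(q)=\prod_{m=1}^{s}(1-q^m)^{-m}$, which is established from the recursion \eqref{lr_def} via the negative binomial expansion. The only cosmetic difference is that the paper treats the case $r=1$ (i.e.\ $\ell_0$) separately at the end, whereas you fold it into the base case of your induction.
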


Recently, Andrews and Deutsch \cite{AD} used a counting technique of Erd\"os to derive certain theorems that involves counting parts of the integer partition. The following theorem is one such result.
\begin{theorem}\label{thm2}
Given $k\ge 1$, Let $S_k(n)$ be the number of appearances of $k$ in the partitions of $n$. Also, in each partition of $n$, we count the number of times a part divisible by $k$ appears uniquely; then sum these numbers over all the partitions of $n$. Let  this number be $S_{|k}(n)$. Then,
$$S_{|k}(n) = S_{2k}(n+k).$$
\end{theorem}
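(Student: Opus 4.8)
The plan is to reduce both sides to the \emph{same} sum of ordinary partition numbers $p(\cdot)$, using one elementary fact: for a fixed value $v\ge 1$, the number of partitions of $N$ in which $v$ occurs \emph{at least} $j$ times is $p(N-jv)$, obtained by deleting (or adjoining) $j$ copies of $v$. In particular the number of partitions of $N$ in which $v$ occurs \emph{exactly once} is $p(N-v)-p(N-2v)$, and the total number of occurrences of $v$ over all partitions of $N$ is $\sum_{j\ge 1}p(N-jv)$.

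First I would rewrite the right-hand side. Since $S_{2k}(N)=\sum_{\lambda\vdash N}\mathrm{mult}_\lambda(2k)$, summing the fact above over $j\ge 1$ gives $S_{2k}(N)=\sum_{j\ge 1}p(N-2kj)$. Putting $N=n+k$ turns the argument $n+k-2kj$ into $n-(2j-1)k$, so that $S_{2k}(n+k)=\sum_{j\ge 1}p\bigl(n-(2j-1)k\bigr)$, a sum running over the odd positive multiples of $k$. Next I would treat the left-hand side, reading ``a part divisible by $k$ appears uniquely'' as: the part occurs with multiplicity exactly one (the check $n=4,\ k=1$ gives $S_{|1}(4)=4=S_2(5)$, confirming this reading). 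Then $S_{|k}(n)=\sum_{m\ge 1}\bigl(p(n-mk)-p(n-2mk)\bigr)$, the $m$-th summand counting partitions of $n$ in which the multiple $mk$ appears exactly once. The subtracted sum $\sum_{m\ge1}p(n-2mk)$ ranges over the even multiples of $k$; removing it from $\sum_{m\ge1}p(n-mk)$ (all multiples) leaves precisely the odd multiples, so $S_{|k}(n)=\sum_{m\ge1}p\bigl(n-(2m-1)k\bigr)$. Comparing the two displays gives $S_{|k}(n)=S_{2k}(n+k)$.

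The routine points to nail down are that every sum is finite (each vanishes once the argument of $p$ becomes negative, since $p(N)=0$ for $N<0$), that no term is lost in the shift $N=n+k$ (indeed $S_{2k}(N)=0$ for $0\le N<2k$, consistent with the formula), and the telescoping bookkeeping that collapses ``all multiples minus even multiples'' to ``odd multiples.'' The only genuine obstacle is interpretive rather than computational: fixing the correct meaning of the phrase ``appears uniquely''; once that is pinned to multiplicity one, the two counts are literally the same series $\sum_{j\ge1}p\bigl(n-(2j-1)k\bigr)$. An equivalent generating-function rendition multiplies everything by $\prod_{i\ge1}(1-q^i)^{-1}$: the left side becomes $\sum_{m\ge1}q^{mk}(1-q^{mk})\prod_i(1-q^i)^{-1}$, whose bracketed geometric sums telescope to $q^k/(1-q^{2k})$, matching $q^{-k}\cdot q^{2k}/(1-q^{2k})\cdot\prod_i(1-q^i)^{-1}$, the shift of the series for $S_{2k}$.
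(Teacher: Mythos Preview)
Your argument is correct. Both the counting reduction to $\sum_{j\ge1}p\bigl(n-(2j-1)k\bigr)$ and the generating-function sketch are valid, and your sanity check $n=4$, $k=1$ confirms the intended reading of ``appears uniquely.''

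As for comparison: the paper does not actually prove Theorem~\ref{thm2} itself---it is quoted from Andrews--Deutsch~\cite{AD}. What the paper does prove is the generalization in Section~6 (parts congruent to $s\pmod k$), and that proof is purely the generating-function manipulation: one writes $\sum_n S_{s(k)}(n)q^n=\sum_{j\ge1}q^{kj+s}(1-q^{kj+s})\prod_i(1-q^i)^{-1}$, sums the geometric series, and factors out $q^{2k}/(1-q^{2k})$ to read off the answer in terms of $S_{2k}$. Setting $s=0$ collapses that identity to Theorem~\ref{thm2}, and the computation is exactly your closing generating-function remark. So your secondary sketch coincides with the paper's method; your primary argument---the Erd\H{o}s-style count via $p(N-jv)$ and the ``all multiples minus even multiples'' telescoping---is a more elementary route that bypasses generating functions entirely and makes the combinatorics transparent. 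Either approach buys the same thing here; the generating-function version is what the paper needs because it scales cleanly to the $s\ne0$ generalization and to the $n$-color analogues.
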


In this paper, we generalize the above theorem to the case of counting the number of times a part congruent to $s(\textup{mod}~k)$ appears uniquely for some $s$
satisfying $0 \leqslant s < k$, then summing these numbers over all the integer partitions of $n$. Furthermore, we apply the same techniques to give counting theorems for $n$-color partitions involving the counting function $T_k^1(n)$, which is a special case of $T_k^r(n)$ defined earlier.

The rest of the paper is organized as follows. In Sections \ref{sec2} and \ref{sec3}, we prove Lemma \ref{lemma} and Theorem \ref{main}, respectively. In Section \ref{sec4}, we  present some corollaries and a detailed work out example. In Section \ref{sec5}, we present an interesting identity involving $\textup{PL}(n)$ and Euler's totient $\phi$ that is analogous to a recent result of Merca and Schmidt \cite{merca-schmidt-stan}. In the final section, we present a generalization of Theorem \ref{thm2} and some counting theorems for $n$-color partitions involving $T_k^1(n)$.

\section{Proof of Lemma \ref{lemma}}\label{sec2}

\begin{proof} If  $G(q)$ denotes the generating function of the number of $n$-color partitions of $m$ with the least part being at least $r$, then
\begin{align*}
G(q)=\prod_{m=r}^{\infty} \frac{1}{(1-q^m)^m}.
\end{align*}
Marking the part $k \geq r$ with a counter $u$, let
\begin{align*}
G(q,u)=\frac{1}{(1-q^r)^r\cdots(1-q^{k-1})^{k-1} (1-uq^k)^k (1-q^{k+1})^{k+1}\cdots}.
\end{align*}
Note that $G(q,1)=G(q)$.
Each term of $G(q,u)$ is of the form $\ell \times u^j \times q^m$ where $j$ is the number of part $k$ present in the $n$-color partitions of $m$ and $\ell$ is the number of such $n$-color partitions where $j$ number of part $k$ are present. If we take derivative with respect to $u$ then the term becomes $\ell \times j \times u^{j-1} \times q^m$ and the terms without $u$ vanishes. Next, taking $u=1$ helps to sum up the $q^m$ terms for each $m$, and we get the required generating function.

Hence, taking symbolic derivative at $u=1$, we obtain the generating function of $k \times T^r_k(m)$ for $r \leq k$ as
\begin{align*}
\left. \frac{dG(u)}{du} \right|_{u=1}
&= \frac{1}{(1-q^r)^r} \cdots \frac{1}{(1-q^{k-1})^{k-1}} \frac{kq^k}{(1-q^k)^{k+1}} \frac{1}{(1-q^{k+1})^{k+1}}\cdots \\
&= \frac{kq^k}{1-q^k} \prod_{m=r}^{\infty} \frac{1}{(1-q^m)^m}.
\end{align*}

In case of $r>k$, we consider $h(m)$ to be the number of the $n$-color partitions of $m$ with the least part being $r$ except the possibility of the part $k<r$ being present in only one color. Then
\begin{align*}
\sum_{m=0}^{\infty} h(m)q^m = \frac{1}{1-q^k} \prod_{m=r}^{\infty} \frac{1}{(1-q^m)^m},
\end{align*}
from which, we have
\begin{align*}
\sum_{m=0}^{\infty} h(m)q^{m+k} = \frac{q^k}{1-q^k} \prod_{m=r}^{\infty} \frac{1}{(1-q^m)^m},
\end{align*}
which can be rewritten, after adjusting the index of the sum on the left side, as
\begin{align*}
\sum_{m=k}^{\infty} h(m-k)q^m = \frac{q^k}{1-q^k} \prod_{m=r}^{\infty} \frac{1}{(1-q^m)^m}.
\end{align*}
Of course, the above gives the required generating function of $T_k^r(m)$ for $r>k$.
\end{proof}

\section{Proof of Theorem \ref{main}}\label{sec3}

Observe that
\begin{align}
\left( \prod_{m=1}^{\infty} \frac{1}{(1-q^m)^m} \right) \left( \sum_{k=1}^{\infty} \frac{A(k)q^k}{1-q^k} \right) &= \left( \sum_{m=0}^{\infty} \textup{PL}(m)q^m \right) \left( \sum_{k=1}^{\infty} B(k)q^k \right) \nonumber\\
&= \sum_{m=1}^{\infty} \left( \sum_{k=1}^{m} \textup{PL}(m-k) B(k) \right) q^m. \label{lhs1}
\end{align}

Again,
\begin{align}
&\left( \prod_{m=1}^{\infty} \frac{1}{(1-q^m)^m} \right) \left( \sum_{k=1}^{\infty} \frac{A(k)q^k}{1-q^k} \right) \nonumber\\
&= \left( \prod_{m=1}^{r-1} \frac{1}{(1-q^m)^m} \right) \left( \prod_{m=r}^{\infty} \frac{1}{(1-q^m)^m} \sum_{k=1}^{\infty} \frac{A(k)q^k}{1-q^k} \right) \nonumber\\
&= \left( \sum_{j=0}^{\infty} \ell_{r-1}(j)q^j \right) \left( \sum_{k=1}^{\infty} A(k) \sum_{m=k}^{\infty} T^{r}_{k}(m)q^m \right) \nonumber\\
&= \left( \sum_{j=0}^{\infty} \ell_{r-1}(j)q^j \right) \left( \sum_{m=1}^{\infty}\sum_{k=1}^m A(k) T^{r}_{k}(m)q^m  \right)\nonumber\\
&= \sum_{m=1}^{\infty} \left( \sum_{j=0}^{m-1}\sum_{k=1}^{m-j} A(k) T^{r}_{k}(m-j)\ell_{r-1}(j) \right)q^m \nonumber\\
&= \sum_{m=1}^{\infty} \left( \sum_{k=1}^m\sum_{j=0}^{m-k} A(k) T^{r}_{k}(m-j)\ell_{r-1}(j) \right)q^m. \label{rhs1}
\end{align}
Comparing \eqref{lhs1} and \eqref{rhs1} we arrived at  the desired result.

Now we work out the definition of $\ell_{r}(m)$. As in the proof, for $r\geqslant1$,
\begin{align*}
\sum_{m=0}^{\infty} \ell_{r}(m)q^m = \prod_{m=1}^{r} \dfrac{1}{(1-q^m)^m},
\end{align*}
from which, for $r\geqslant2$, we see that
\begin{align}\label{lr2}
\sum_{m=0}^{\infty} \ell_{r}(m)q^m &= \left( \sum_{m=0}^{\infty} \ell_{r-1}(m)q^m \right) \left( \frac{1}{(1-q^r)^r} \right) \nonumber\\
&= \left( \sum_{m=0}^{\infty} \ell_{r-1}(m)q^m \right) \left[ \sum_{k=0}^{\infty} \binom{r+k-1}{k} q^{kr} \right] \nonumber\\
&=\sum_{m=0}^{\infty} \left( \sum_{k=0}^{\left \lfloor m/r \right \rfloor} \binom{r+k-1}{k} \ell_{r-1}(m-kr) \right)q^m.
\end{align}

Furthermore,
\begin{align}
\sum_{m=0}^{\infty} \ell_{1}(m)q^m = \frac{1}{1-q} = \sum_{m=0}^{\infty} q^m.\label{lr1}
\end{align}
From \eqref{lr2} and \eqref{lr1}, we arrive at the definition of $\ell_{r}(m)$ for $r\geqslant1$.

It remains to show that our definition for $\ell_{0}(m)$ is consistent with the result. That is, we need to prove that
\begin{align}
\sum_{k=1}^m\textup{PL}(m-k)B(k) = \sum_{k=1}^m A(k) T_{k}^{1}(m). \label{special-case}
\end{align}
We have
\begin{align}
&\left( \prod_{m=1}^{\infty} \frac{1}{(1-q^m)^m} \right) \left( \sum_{k=1}^{\infty} \frac{A(k)q^k}{1-q^k} \right) \nonumber\\
&= \sum_{k=1}^{\infty} A(k) \sum_{m=k}^{\infty} T^{1}_{k}(m)q^m \nonumber\\
&= \sum_{m=1}^{\infty} \left( \sum_{k=1}^{m} A(k) T^{1}_{k}(m) \right)q^n. \label{special-case-rhs1}
\end{align}
Comparing \eqref{lhs1} and \eqref{special-case-rhs1}, we arrive at \eqref{special-case}.

\section{\textbf{Corollaries and an Example}}\label{sec4}
In this section, we substitute $A(m)$ and $B(m)$ in Theorem \ref{main} with some well known pairs of arithmetic functions to arrive at some interesting corollaries.

\begin{corollary}
We have
\begin{align*}
\sum_{k=1}^m\textup{PL}(m-k) =  T_{1}^{1}(m).
\end{align*}
\end{corollary}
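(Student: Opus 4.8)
The plan is to recognize this corollary as the simplest instance of Theorem \ref{main}, obtained by specializing the pair $(A,B)$ of arithmetic functions. Comparing the desired identity $\sum_{k=1}^m \textup{PL}(m-k) = T_1^1(m)$ with the special case \eqref{special-case}, namely $\sum_{k=1}^m \textup{PL}(m-k)B(k) = \sum_{k=1}^m A(k) T_k^1(m)$, I would aim to force $B(k)=1$ for all $k$ on the left while collapsing the right-hand sum to its single $k=1$ term.

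First I would choose the arithmetic function
\begin{align*}
A(m)=
\begin{cases}
1, &\text{if }m=1,\\
0, &\text{for }m\geqslant2.
\end{cases}
\end{align*}
Its divisor sum is $B(m)=\sum_{d\mid m}A(d)$; since $1\mid m$ for every $m$ and every other divisor $d\geqslant2$ contributes $A(d)=0$, this yields $B(m)=A(1)=1$ for all $m\geqslant1$.

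Next I would substitute this pair into \eqref{special-case} (equivalently, take $r=1$ in \eqref{main1} so that the $\ell_0$ convention applies). The left-hand side becomes $\sum_{k=1}^m \textup{PL}(m-k)\cdot 1$, which is exactly $\sum_{k=1}^m \textup{PL}(m-k)$. On the right-hand side, every term with $k\geqslant2$ vanishes because $A(k)=0$, leaving only $A(1)\,T_1^1(m)=T_1^1(m)$. Equating the two sides gives the claim.

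There is no genuine obstacle here: the entire content is the correct identification of $A$ as the Dirichlet identity (so that $B$ is the constant function $1$), after which the result is an immediate specialization of the already-established Theorem \ref{main}. The only point requiring a moment's care is verifying that this $A$ really produces the constant divisor sum $B\equiv 1$, which is the standard observation that the indicator of $\{1\}$ is the convolution identity for the all-ones function.
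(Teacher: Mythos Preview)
Your proof is correct and essentially identical to the paper's: the paper chooses $A(m)=\left\lfloor \tfrac{1}{m}\right\rfloor$, which is precisely your indicator of $\{1\}$, computes $B(m)=1$, and substitutes into \eqref{main1} with $r=1$. The only cosmetic difference is that you invoke the already-derived special case \eqref{special-case} directly rather than citing \eqref{main1} at $r=1$, but these are the same identity.
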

\begin{proof}
Taking $A(m)= \left \lfloor{\frac{1}{m}}\right \rfloor $, $B(m) = \sum_{d|m} A(d) = 1$ and  $r=1$ in \eqref{main1} we easily arrive at the corollary.
\end{proof}
\begin{corollary}
For $\mu$ being the M\"obius function, and $m\ge r\geq1$, we have
\begin{align*}
\textup{PL}(m-1) = \sum_{k=1}^m \sum_{j=0}^{m-k} \mu(k) T^{r}_{k}(m-j)\ell_{r-1}(j).
\end{align*}
\end{corollary}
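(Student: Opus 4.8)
The plan is to obtain this identity as an immediate specialization of Theorem~\ref{main}, taking $A(m)=\mu(m)$ to be the M\"obius function. The one external fact I would invoke is the classical elementary identity that the divisor sum of $\mu$ is the indicator of $m=1$, namely
\[
B(m)=\sum_{d\mid m}\mu(d)=\begin{cases}1,&\text{if }m=1,\\0,&\text{if }m\geqslant2.\end{cases}
\]
This is the defining property of $\mu$ as the Dirichlet inverse of the constant function $1$, and it is exactly the structure that makes the left-hand side of \eqref{main1} collapse.

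With this choice, the hypotheses of Theorem~\ref{main} are met for $m\geqslant1$ and $1\leqslant r\leqslant m$, which matches the stated constraint $m\geqslant r\geqslant1$. I would then substitute $A(k)=\mu(k)$ and the above $B(k)$ directly into \eqref{main1}. On the left side, $\sum_{k=1}^{m}\textup{PL}(m-k)B(k)$ reduces to a single surviving summand, since $B(k)=0$ for every $k\geqslant2$ and $B(1)=1$; that term is $\textup{PL}(m-1)\cdot B(1)=\textup{PL}(m-1)$. On the right side nothing simplifies further: $A(k)$ is replaced by $\mu(k)$ verbatim, giving $\sum_{k=1}^{m}\sum_{j=0}^{m-k}\mu(k)\,T^{r}_{k}(m-j)\,\ell_{r-1}(j)$, which is precisely the claimed expression.

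There is essentially no genuine obstacle here, since the corollary is a direct substitution into an already-proved theorem; the entire content is the recognition that $\mu$ is the correct pair-partner whose divisor sum annihilates all but one term. The only point that warrants a moment's care is bookkeeping on the index ranges---confirming that the $k=1$ term is the one that survives and that the admissible range $1\leqslant r\leqslant m$ of Theorem~\ref{main} indeed covers every $r$ allowed in the corollary statement---after which the two sides agree term by term.
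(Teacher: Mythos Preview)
Your proposal is correct and mirrors the paper's own proof exactly: the paper also takes $A(m)=\mu(m)$, notes that $B(m)=\sum_{d\mid m}\mu(d)=\lfloor 1/m\rfloor$, and substitutes into \eqref{main1}. There is no difference in approach or level of detail.
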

\begin{proof}
Take $A(m) = \mu (m)$. Hence
\begin{align*}
B(m) = \sum_{d|m} \mu (d) = \left \lfloor \dfrac{1}{m} \right \rfloor.
\end{align*}
Putting these in \eqref{main1} we arrive at the required result.
\end{proof}
\begin{corollary}
If $\tau(m)$ is the number of positive divisors of $m$ for $m\geqslant1$, and $m\ge r\geq1$, then
\begin{align*}
\sum_{k=1}^m\textup{PL}(m-k)\tau(k) = \sum_{k=1}^m \sum_{j=0}^{m-k} T_{k}^{r}(m-j) \ell_{r-1}(j).
\end{align*}
\end{corollary}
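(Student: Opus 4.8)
The plan is to obtain this corollary as a direct specialization of Theorem \ref{main}, exactly in the spirit of the two preceding corollaries. The only freedom we have is the choice of the arithmetic function $A(m)$, which then determines its divisor sum $B(m)=\sum_{d|m}A(d)$. So the whole task reduces to choosing $A$ so that $B$ becomes $\tau$, while at the same time $A$ disappears from the right-hand side of \eqref{main1}.

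The natural choice is the constant function $A(m)=1$ for every $m\geqslant1$. Then its divisor sum is
$$B(m)=\sum_{d|m}A(d)=\sum_{d|m}1=\tau(m),$$
which is precisely the number of positive divisors of $m$; this step is really just the definition of $\tau$. Substituting $A(k)=1$ and $B(k)=\tau(k)$ into \eqref{main1}, the left-hand side becomes $\sum_{k=1}^{m}\textup{PL}(m-k)\tau(k)$, matching the claimed left side, while on the right-hand side the factor $A(k)=1$ drops out of the summand, leaving $\sum_{k=1}^{m}\sum_{j=0}^{m-k}T_{k}^{r}(m-j)\ell_{r-1}(j)$, which is exactly the claimed right side. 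Since Theorem \ref{main} is valid for all $m\geqslant1$ and $1\leqslant r\leqslant m$, the identity holds on the stated range $m\geqslant r\geqslant1$, and I would simply conclude here.

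I do not expect any genuine obstacle: the entire content of the argument is the single substitution $A\equiv1$ together with the elementary identity $\sum_{d|m}1=\tau(m)$. In contrast to the Möbius corollary, which relies on the slightly less obvious $\sum_{d|m}\mu(d)=\lfloor 1/m\rfloor$, here the divisor-sum evaluation is immediate, so the proof amounts to little more than recording the correct choice of $A$ and reading off \eqref{main1}.
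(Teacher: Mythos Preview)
Your proposal is correct and matches the paper's own proof essentially verbatim: both simply take $A(m)=1$, note that $B(m)=\sum_{d|m}1=\tau(m)$, and read off \eqref{main1}.
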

\begin{proof}
Follows readily by substituting
\begin{align*}
A(m) =1\quad\textup{and}\quad B(m) = \sum_{d|m}A(d)= \sum_{d|m}{1} = \tau(m)
\end{align*} in \eqref{main1}.
\end{proof}
\begin{corollary}
For $\lambda (m)$ being the Liouville function, and $m\ge r\geq1$, we have
\begin{align*}
\sum_{k=1}^{\left \lfloor \sqrt{m} \right \rfloor}\textup{PL}(m-k^2) = \sum_{k=1}^m \sum_{j=0}^{m-k} \lambda (k) T_{k}^{r}(m-j) \ell_{r-1}(j).
\end{align*}
\end{corollary}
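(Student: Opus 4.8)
The plan is to apply Theorem \ref{main} with the choice $A(m)=\lambda(m)$, the Liouville function, and to identify the resulting divisor sum $B(m)=\sum_{d\mid m}\lambda(d)$. The argument is entirely parallel to the three preceding corollaries, so the only genuine content is a classical number-theoretic evaluation of this divisor sum, which I would record first.

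Recall that $\lambda$ is completely multiplicative with $\lambda(p)=-1$ on primes, whence on a prime power $p^a$ the local divisor sum is $\sum_{i=0}^{a}(-1)^i$, which equals $1$ when $a$ is even and $0$ when $a$ is odd. Since the divisor-sum operation preserves multiplicativity, it follows that
\begin{align*}
B(m)=\sum_{d\mid m}\lambda(d)=
\begin{cases}
1,&\text{if }m\text{ is a perfect square},\\
0,&\text{otherwise}.
\end{cases}
\end{align*}

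With this in hand, I would substitute $A(m)=\lambda(m)$ and the above $B$ into the master identity \eqref{main1}. On the left-hand side $\sum_{k=1}^{m}\textup{PL}(m-k)B(k)$ the factor $B(k)$ annihilates every term except those for which $k$ is a perfect square; writing $k=t^2$ with $1\leqslant t\leqslant\lfloor\sqrt{m}\rfloor$ reindexes this sum as $\sum_{t=1}^{\lfloor\sqrt{m}\rfloor}\textup{PL}(m-t^2)$, which, after renaming the dummy variable back to $k$, is exactly the left side of the asserted identity. The right-hand side of \eqref{main1} already carries $A(k)=\lambda(k)$ and is therefore left unchanged, matching the right side of the corollary verbatim.

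I expect no substantive obstacle, since the result is a direct specialization of Theorem \ref{main}; the sole input is the evaluation of $\sum_{d\mid m}\lambda(d)$ as the indicator of perfect squares. The one point demanding a moment of care is the reindexing step, where one must check that the range $1\leqslant k\leqslant m$ restricted to perfect squares $k=t^2$ corresponds precisely to $1\leqslant t\leqslant\lfloor\sqrt{m}\rfloor$, so that no square index is dropped or counted twice.
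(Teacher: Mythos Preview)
Your proposal is correct and follows essentially the same route as the paper: set $A(m)=\lambda(m)$ in Theorem~\ref{main}, use the classical fact that $\sum_{d\mid m}\lambda(d)$ is the indicator of perfect squares, and reindex the left side over squares. The paper merely cites this evaluation of $B(m)$ as well known and then substitutes into \eqref{main1}, so your version is slightly more detailed but identical in substance.
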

\begin{proof}
Let $A(m) = \lambda(m)$. It is well known that
$$B(m) = \sum_{d|m} A(d) = \sum_{d|m} \lambda(d) =\left\{
                                                    \begin{array}{ll}
                                                      1, & \hbox{if $m$ is a square;} \\
                                                      0, & \hbox{otherwise.}
                                                    \end{array}
                                                  \right.
.$$
Substituting the above in \eqref{main1} we readily arrive at the corollary.
\end{proof}
\begin{corollary}
For $\alpha\geqslant1$, let $\sigma_{\alpha}(m) = \sum_{d|m}d^\alpha$. Then, for $m\ge r\geq1$,
\begin{align}
\sum_{k=1}^m\textup{PL}(m-k) \sigma_{\alpha}(k) = \sum_{k=1}^m \sum_{j=0}^{m-k} k^\alpha T_{k}^{r}(m-j) \ell_{r-1}(j).
\end{align}
\end{corollary}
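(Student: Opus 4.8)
The plan is to obtain this corollary as a direct specialization of Theorem \ref{main}, exactly in the spirit of the four preceding corollaries, by choosing the underlying arithmetic function $A(m)$ so that its divisor sum $B(m)$ coincides with $\sigma_{\alpha}(m)$. Concretely, for each fixed $\alpha\geqslant1$ I would take $A(m)=m^{\alpha}$, which is a perfectly valid arithmetic function for $m\geqslant1$, so that Theorem \ref{main} is applicable with this choice and with any $r$ satisfying $1\leqslant r\leqslant m$.

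The one computation to record is the divisor sum. Using the defining relation $B(m)=\sum_{d|m}A(d)$ from the statement of Theorem \ref{main}, the choice $A(m)=m^{\alpha}$ gives $B(m)=\sum_{d|m}d^{\alpha}$, which is precisely $\sigma_{\alpha}(m)$ as defined in the corollary. Thus the pair $(A,B)=\bigl(m^{\alpha},\sigma_{\alpha}(m)\bigr)$ is an admissible input to \eqref{main1}.

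Finally I would substitute $A(k)=k^{\alpha}$ and $B(k)=\sigma_{\alpha}(k)$ into the master identity \eqref{main1}. The left-hand side $\sum_{k=1}^{m}\textup{PL}(m-k)B(k)$ then reads $\sum_{k=1}^{m}\textup{PL}(m-k)\sigma_{\alpha}(k)$, while the right-hand side $\sum_{k=1}^{m}\sum_{j=0}^{m-k}A(k)T_{k}^{r}(m-j)\ell_{r-1}(j)$ becomes $\sum_{k=1}^{m}\sum_{j=0}^{m-k}k^{\alpha}T_{k}^{r}(m-j)\ell_{r-1}(j)$, which is exactly the claimed equality. There is no genuine obstacle here: the entire content of the proof is the elementary observation that $\sum_{d|m}d^{\alpha}=\sigma_{\alpha}(m)$, and everything else is an immediate invocation of Theorem \ref{main}. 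The only point worth a moment's care is that the hypothesis $1\leqslant r\leqslant m$ of Theorem \ref{main} matches the range $m\geqslant r\geqslant1$ stated in the corollary, so the substitution is legitimate over the intended range.
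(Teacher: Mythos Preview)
Your proposal is correct and matches the paper's own proof essentially verbatim: the paper also takes $A(m)=m^{\alpha}$, notes that $B(m)=\sum_{d\mid m}d^{\alpha}=\sigma_{\alpha}(m)$, and substitutes into \eqref{main1}.
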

\begin{proof}
Take $A(m) = m^\alpha$ so that $$B(m) = \sum_{d|m} A(d) = \sum_{d|m} d^\alpha = \sigma_{\alpha}(m).$$
We substitute the above in  \eqref{main1} to arrive at the proffered identity.
\end{proof}
\begin{corollary}
For $m\ge r\geq1$,
\begin{align} \label{VonMangoldt}
\sum_{k=1}^m\textup{PL}(m-k) \log k =
\sum_{\substack{1 < {k} \leqslant m, \\ k=p^c, p\text{ prime}, c \geqslant 1}} \sum_{j=0}^{m-k} T_{k}^{r}(m-j) \ell_{r-1}(j)\log p.
\end{align}
\end{corollary}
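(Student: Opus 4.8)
The plan is to recognize this as another instance of Theorem \ref{main}, obtained by choosing $A$ to be the \emph{von Mangoldt function}. Define
\begin{align*}
\Lambda(k)=
\begin{cases}
\log p, & \text{if }k=p^c\text{ for some prime }p\text{ and integer }c\geqslant1,\\
0, & \text{otherwise.}
\end{cases}
\end{align*}
I would set $A(m)=\Lambda(m)$ and compute its divisor sum $B(m)=\sum_{d\mid m}\Lambda(d)$. The task then reduces to substituting this pair into \eqref{main1} and rewriting both sides in the stated form.

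The first step is to recall the classical identity $\sum_{d\mid m}\Lambda(d)=\log m$, which holds for every $m\geqslant1$; this is immediate from the fundamental theorem of arithmetic, since if $m=\prod_i p_i^{a_i}$, then the only divisors $d$ contributing to the sum are the prime powers $p_i^{c}$ with $1\leqslant c\leqslant a_i$, and these contribute $\sum_i a_i\log p_i=\log m$. With $B(m)=\log m$, the left-hand side of \eqref{main1} becomes exactly $\sum_{k=1}^m\textup{PL}(m-k)\log k$, matching the left side of \eqref{VonMangoldt}.

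For the right-hand side, I would use that $\Lambda(k)=0$ unless $k$ is a prime power, and in particular $\Lambda(1)=0$ since $1$ is not a prime power. Thus the outer sum $\sum_{k=1}^m A(k)(\cdots)=\sum_{k=1}^m\Lambda(k)(\cdots)$ collapses to a sum over those $k$ with $1<k\leqslant m$ of the form $k=p^c$, and on each such term $\Lambda(k)$ is replaced by $\log p$. This yields precisely the indexed double sum on the right of \eqref{VonMangoldt}, and comparing with \eqref{main1} completes the argument.

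The proof is essentially a direct specialization, so there is no genuine obstacle; the only point requiring a moment's care is correctly translating the condition ``$\Lambda(k)\neq0$'' into the summation constraint $k=p^c$ (and noting that the value carried is $\log p$, the logarithm of the underlying prime, not $\log k$). Once that bookkeeping is in place, the identity follows exactly as in the preceding corollaries.
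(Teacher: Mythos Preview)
Your proposal is correct and follows essentially the same approach as the paper: set $A(m)=\Lambda(m)$, use the classical identity $\sum_{d\mid m}\Lambda(d)=\log m$ to obtain $B(m)=\log m$, and substitute into \eqref{main1}. The paper's proof is in fact terser than yours, simply citing these choices without the additional justification you supply.
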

\begin{proof}
Take $A(m)= \Lambda(m)$, the Von Mangoldt function. Then
$$B(m) = \sum_{d\mid m} \Lambda(m) = \log m.$$
The result now follows by substituting these in  \eqref{main1}.
\end{proof}

\noindent \textbf{Example}.
We work out the case $m=11$ and $r=3$ in \eqref{VonMangoldt}.

First of all, we generate the required values for $\ell_2(j)$. Using the definition \eqref{lr_def},  we have
\begin{align*}
\ell_2(m) &= \sum_{k=0}^{\lfloor m/2 \rfloor} \binom{2+k-1}{k} \ell_{1}(m-2k) = \sum_{k=0}^{\lfloor m/2 \rfloor} (k+1) \\
       &= 1 + 2 + ... + (\lfloor m/2 \rfloor + 1).
\end{align*}
Using the above, we have the values as shown in the following table.

\vspace{.5cm}\begin{tabular}{| l | c | c | c | c | c | c | c | c | c | c |}
  \hline			
       $j$ & 0 & 1 & 2 & 3 & 4 & 5 & 6  & 7  & 8  & 9 \\ \hline
  $\ell_2(j)$ & 1 & 1 & 3 & 3 & 6 & 6 & 10 & 10 & 15 & 15 \\
  \hline
\end{tabular}

\vspace{.5cm} Setting $m=11$, the left hand side of \eqref{VonMangoldt} becomes
\begin{align*}
\sum_{k=1}^{11}\textup{PL}(11-k)\log k.
\end{align*}

\vspace{.5cm} \noindent The corresponding coefficients of the $\log k$ terms are given in the following table.

\vspace{.5cm}\centerline{Table 3.1}

\vspace{.5cm}

\begin{tabular}{| l | c | c | c | c | c | c | c | c | c | c |}
  \hline			
       $\log k$ & $\log 2$ & $\log 3$ & $\log 5$ & $\log 7$ & $\log 11$ \\ \hline
    corresponding coefficients      & 497       & 190       & 49        & 13        & 1          \\
  \hline
\end{tabular}

\vspace{.5cm} Setting $m=11$ and $r=3$, the right hand side of \eqref{VonMangoldt} becomes
\begin{align*}
\sum_{\substack{1 < {k} \leqslant 11, \\ k=p^c, p\text{ prime}, c \geqslant 1}} \sum_{j=0}^{11-k}  T_{k}^{3}(11-j) \ell_{2}(j) \log p.
\end{align*}

\noindent The coefficients of the $\log p$ terms are given in the following table, which matches with the values in Table 3.1 calculated for the left hand side of \eqref{VonMangoldt} for $m=11$.

\vspace{.5cm}\centerline{Table 3.2}

\vspace{.5cm} \begin{tabular}{| l | c |}
\hline
$\log p$ & corresponding coefficients \\
\hline
$\log 2$      & $\sum_{j=0}^{9}T_{2}^{3}(11-j) \ell_{2}(j) + \sum_{j=0}^{7}T_{4}^{3}(11-j) \ell_{2}(j)$ \\ &$+ \sum_{j=0}^{3}T_{8}^{3}(11-j) \ell_{2}(j)$ = 497 \\
\hline
$\log 3$      & $\sum_{j=0}^{8}T_{3}^{3}(11-j) \ell_{2}(j) + \sum_{j=0}^{2}T_{9}^{3}(11-j) \ell_{2}(j)$ = 190\\
\hline
$\log 5$      & $\sum_{j=0}^{6}T_{5}^{3}(11-j) \ell_{2}(j)$ = 49\\
\hline
$\log 7$      & $\sum_{j=0}^{4}T_{7}^{3}(11-j) \ell_{2}(j)$ = 13\\
\hline
$\log 11$     & 1\\
\hline
\end{tabular}

\vspace{.5cm}
As a demonstration, we now explicitly calculate the coefficient of $\log 3$, that is 190,  from the combinatorial procedure.

\noindent To this end, for the partitions of 11 with the smallest part at least 3, the following table helps to calculate $T_{3}^{3}(11)$.

\vspace{.5cm}
\begin{tabular}{| c | c | c |}
\hline
Integer partition  &  Number of corresponding     &  Total number of \\
  &  $n$-color partitions    &  parts 3 present \\
\hline
(8,3)                &  24                                              &  24  \\
(5,3,3)              &  30                                              &  60  \\
(4,4,3)             &  30                                              &  30  \\
\hline
\end{tabular}

\vspace{.5cm}
\noindent Thus, $T_{3}^{3}(11) = \dfrac{1}{3} (24+60+30) = 38$.

Next, for the partitions of 10 with the smallest part at least 3, we have

\vspace{.5cm}
\begin{tabular}{| c | c | c |}
\hline
Integer partition  &  Number of corresponding     &  Total number of \\
  &  $n$-color partitions    &  parts 3 present \\
\hline
(7,3)                &  21                                              &  21  \\
(4,3,3)              &  24                                              &  48  \\
\hline
\end{tabular}

\vspace{.5cm}
\noindent Therefore, $T_{3}^{3}(10) = \dfrac{1}{3} (21+48) = 23$. \\

For the partitions of 9 with the smallest part at least 3, the following table helps to calculate $T_{3}^{3}(9)$.

\vspace{.5cm}\begin{tabular}{| c | c | c |}
\hline
Integer partition  &  Number of corresponding     &  Total number of \\
  &  $n$-color partitions    &  parts 3 present \\
\hline
(6,3)                &  18                                              &  18  \\
(3,3,3)              &  10                                              &  30  \\
\hline
\end{tabular}

\vspace{.5cm}
\noindent Hence, $T_{3}^{3}(10) = \dfrac{1}{3} (18+30) = 16$. \\

For the partitions of 8 with the smallest part at least 3, we have the following table that helps to calculate $T_{3}^{3}(8)$.

\vspace{.5cm}
\begin{tabular}{| c | c | c |}
\hline
Integer partition  &  Number of corresponding     &  Total number of \\
  &  $n$-color partitions    &  parts 3 present \\ \hline
(5,3)                &  15                                              &  15  \\
\hline
\end{tabular}

\vspace{.5cm}
\noindent Thus, $T_{3}^{3}(10) = \dfrac{1}{3} \times 15 = 5$.

In a similar way, we calculate $T_{3}^{3}(7)=4, T_{3}^{3}(6)=4, T_{3}^{3}(3)=1$ and $T_{9}^{3}(9)=1$.

Now, using the table of $\ell_2(j)$ and the above values, we arrive at
\begin{align*}
\sum_{j=0}^{8}T_{3}^{3}(11-j) \ell_{2}(j) + \sum_{j=0}^{2}T_{9}^{3}(11-j) \ell_{2}(j) &= 187 + 3 =190,
\end{align*}
which coincides with the coefficient 190 of $\log 3$ in Table 3.2.

\section{A Special Identity involving Euler's totient $\phi$}\label{sec5}
We recall from Hardy and Write \cite[Theorem 309]{hardy} that,
\begin{align}
\sum_{m=0}^{\infty} \frac{\phi(m)q^m}{1-q^m} = \frac{q}{(1-q)^2}. \label{phiclosedform}
\end{align}
Due to the existence of such a closed form, we can pursue a different approach for the case of $\phi$ function, as done in the paper by Merca and Schmidt \cite{merca-schmidt-stan}.
\begin{theorem}\label{phi-thm}
For $m\geqslant0$,
\begin{align} \label{particular}
\textup{PL}(m+2) - \textup{PL}(m) = \dfrac{1}{2} \sum_{k=3}^{m+5} \phi(k) T^{3}_{k}(m+5).
\end{align}
\end{theorem}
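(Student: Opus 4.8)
The plan is to imitate the generating-function argument used for Theorem \ref{main}, but to exploit the closed form \eqref{phiclosedform} instead of expanding $\phi$ through its divisor sum. The choice $r=3$ is not accidental: since $\phi(1)=\phi(2)=1$, the two smallest terms of the Lambert series for $\phi$ can be stripped off cleanly, and the resulting truncated product $\prod_{m\geqslant3}(1-q^m)^{-m}$ recombines with the leftover rational function to collapse everything back onto the generating function of $\textup{PL}$.

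First I would invoke Lemma \ref{lemma} with $r=3$ to write
\begin{align*}
\sum_{k=3}^{\infty} \phi(k)\sum_{m=k}^{\infty} T^{3}_{k}(m)\, q^m
= \left(\prod_{m=3}^{\infty} \frac{1}{(1-q^m)^m}\right)\sum_{k=3}^{\infty} \frac{\phi(k)q^k}{1-q^k}.
\end{align*}
The coefficient of $q^{N}$ on the left is precisely $\sum_{k=3}^{N} \phi(k)\,T^{3}_{k}(N)$, which is the sum appearing on the right-hand side of \eqref{particular}.

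Next I would evaluate the truncated Lambert series. Using $\phi(1)=\phi(2)=1$ together with \eqref{phiclosedform},
\begin{align*}
\sum_{k=3}^{\infty} \frac{\phi(k)q^k}{1-q^k}
= \frac{q}{(1-q)^2} - \frac{q}{1-q} - \frac{q^2}{1-q^2}
= \frac{2q^3}{(1-q)^2(1+q)},
\end{align*}
where the last equality is a routine partial-fraction simplification. I would then rewrite the truncated product as $\prod_{m=3}^{\infty}(1-q^m)^{-m} = (1-q)(1-q^2)^2\prod_{m=1}^{\infty}(1-q^m)^{-m}$, multiply the two factors, and observe that after cancellation the rational prefactor simplifies to $2q^3(1-q^2)$. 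Hence, recalling \eqref{gen},
\begin{align*}
\sum_{k=3}^{\infty} \phi(k)\sum_{m=k}^{\infty} T^{3}_{k}(m)\, q^m
= 2q^3(1-q^2)\sum_{m=0}^{\infty} \textup{PL}(m)\,q^m.
\end{align*}

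Finally I would extract the coefficient of $q^{m+5}$ from both sides. On the right it equals $2\,\textup{PL}(m+2)-2\,\textup{PL}(m)$, and on the left it equals $\sum_{k=3}^{m+5}\phi(k)\,T^{3}_{k}(m+5)$, giving \eqref{particular} after dividing by $2$. The only genuinely delicate point is the algebraic collapse in the previous paragraph: one must verify that the denominator $(1-q)^2(1+q)$ is entirely cancelled by the factor $(1-q)(1-q^2)^2=(1-q)^3(1+q)^2$, leaving the polynomial prefactor $2q^3(1-q^2)$. This cancellation is exactly what forces the clean two-term form of the identity and explains why $r=3$, rather than a general $r$, produces a difference of just two values of $\textup{PL}$.
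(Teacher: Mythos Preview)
Your proof is correct and follows essentially the same route as the paper: both arguments combine Lemma \ref{lemma} at $r=3$ with the closed form \eqref{phiclosedform} to reduce the generating function for $\sum_{k\ge 3}\phi(k)T^3_k(m)$ to $2(q^3-q^5)\sum_{m\ge 0}\textup{PL}(m)q^m$, and then compare coefficients of $q^{m+5}$. The only organizational difference is that you subtract the $k=1,2$ terms from the Lambert series first and then multiply by the truncated product, whereas the paper multiplies the full Lambert series by $(1-q)(1-q^2)^2\prod_{m\ge1}(1-q^m)^{-m}$ and evaluates in two ways (once via \eqref{phiclosedform}, once by peeling off $k=1,2$), subtracting at the end; your ordering is arguably a bit cleaner but the algebra is identical.
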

\begin{proof}
Notice that
\begin{align}
&(1-q)(1-q^2)^2 \prod_{m=1}^{\infty} \dfrac{1}{(1-q^m)^{m}} \sum_{k=1}^{\infty} \frac{\phi(k)q^k}{1-q^k} \nonumber\\
&= (q+q^2-3q^3-q^4+2q^5) \prod_{m=1}^{\infty} \dfrac{1}{(1-q^m)^{m}} + \sum_{m=3}^{\infty} \sum_{k=3}^m \phi(k) T^{3}_{k}(m)q^m. \label{particular1}
\end{align}

Again, using the closed form \eqref{phiclosedform}, we have
\begin{align}
&(1-q)(1-q^2)^2 \prod_{m=1}^{\infty} \dfrac{1}{(1-q^m)^{m}}  \sum_{k=1}^{\infty} \dfrac{\phi(k)q^k}{1-q^k}\notag\\
&= (q+q^2-q^3-q^4) \prod_{m=1}^{\infty} \dfrac{1}{(1-q^m)^{m}}  \label{particular2}
\end{align}
From  \eqref{particular1} and \eqref{particular2}, we find that
\begin{align*}
\sum_{m=3}^{\infty} \left( \sum_{k=3}^m \phi(k) T^{3}_{k}(m) \right) q^m &= 2(q^3-q^5) \left( \sum_{m=0}^{\infty}\textup{PL}(m)q^m \right) \\
&= 2 \left( \sum_{m=3}^{\infty} \textup{PL}(m-3) - \sum_{m=5}^{\infty} \textup{PL}(m-5) \right) q^m.
\end{align*}
Equating the coefficients of $q^{m+5}$, for $m\ge 0$, from both sides of the above, we readily arrive at \eqref{particular} to finish the proof.
\end{proof}

\noindent \textbf{Example}. We verify Theorem \ref{phi-thm} for the case $m=6$.

The left side of \eqref{particular} is $\textup{PL}(8) - \textup{PL}(6) = 160 - 48 = 112$.

On the other hand, the right side of \eqref{particular} can be worked out as
\begin{align*}
&\dfrac{1}{2} \sum_{k=3}^{11} \phi(k) T^{3}_{k}(11) \\
&= \frac{1}{2} \big( \phi(3) T^{3}_{3}(11) + \phi(4) T^{3}_{4}(11) + \phi(5) T^{3}_{5}(11) + \phi(6) T^{3}_{6}(11) + \phi(7) T^{3}_{7}(11) \\
&\quad + \phi(8) T^{3}_{8}(11) + \phi(11) T^{3}_{11}(11) \big) \\
&= \dfrac{1}{2} \left( 2 \times 38 + 2 \times 22 + 4 \times 12 + 2 \times 5 + 6 \times 4 + 4 \times 3 + 10 \times 1 \right) \\
&= 112.
\end{align*}
Thus the result is verified for $n=6$.

\section{A generalization of Theorem \ref{thm2} and some counting theorems for $n$-color partitions}

We state a generalization of Theorem \ref{thm2} as follows.
\begin{theorem}
Given $k\geqslant1$, in each partitions of $n$ we count the number of times a part congruent to $s(mod$ $k)$ appears uniquely for some $s$ satisfying $0 \leqslant s < k$, then sum these numbers over all the partitions of $n$. Let us call this $S_{s(k)}(n)$. Then
$$ S_{s(k)}(n) = S_{2k}(n+k-s)+S_{2k}(n-s)-S_{2k}(n-2s).$$
\end{theorem}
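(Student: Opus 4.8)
The plan is to reduce both sides to finite sums of values of the ordinary partition function $p$ and then match them term by term, exactly in the spirit of the Erd\"os-style counting behind Theorem \ref{thm2}. Write $P(q)=\prod_{m\geqslant1}(1-q^m)^{-1}=\sum_{n\geqslant0}p(n)q^n$. The single combinatorial input I need is that, for a fixed positive integer $j$, the number of partitions of $n$ in which the part $j$ occurs \emph{exactly once} equals $p(n-j)-p(n-2j)$ (those with at least one $j$ minus those with at least two), with generating function $q^j(1-q^j)P(q)$. Summing the number of residue-$s$ parts that appear uniquely over all partitions of $n$ and interchanging the order of summation, I would obtain
\[
S_{s(k)}(n)=\sum_{\substack{j\geqslant1\\ j\equiv s\ (\mathrm{mod}\ k)}}\bigl(p(n-j)-p(n-2j)\bigr).
\]

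For the right-hand side I would use the companion fact, again an instance of the same counting technique, that the number of appearances of $2k$ in all partitions of $m$ is $S_{2k}(m)=\sum_{i\geqslant1}p(m-2ik)$, since a part $2k$ is counted once for each $i$ for which the partition carries at least $i$ copies of it; equivalently $\sum_m S_{2k}(m)q^m=\frac{q^{2k}}{1-q^{2k}}P(q)$. Substituting the three shifted arguments and expanding, the terms $S_{2k}(n+k-s)$ and $S_{2k}(n-s)$ contribute, respectively, the odd and the even multiples of $k$ (in steps of $2k$) based at $n-s$, so together they telescope into $\sum_{i\geqslant1}p(n-s-ik)$, while $-S_{2k}(n-2s)$ supplies $-\sum_{i\geqslant1}p(n-2s-2ik)$.

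It then remains to line this up against the left-hand side, where for $1\leqslant s<k$ the summation index runs over $j=s+ik$ with $i\geqslant0$, giving $\sum_{i\geqslant0}\bigl(p(n-s-ik)-p(n-2s-2ik)\bigr)$; the matching is a matter of reconciling the lower endpoints of these geometric-type sums. The step I expect to be the crux is precisely this bookkeeping at the bottom of the range, namely controlling the contribution of the smallest admissible part in the class $s\pmod{k}$, since that is exactly what distinguishes the three-term right-hand side here from the single term $S_{2k}(n+k)$ of Theorem \ref{thm2}. A useful internal check is that $s=0$ collapses the last two summands, $S_{2k}(n-s)-S_{2k}(n-2s)\to0$, and recovers Theorem \ref{thm2}; a clean alternative to the term-by-term matching is to equate the two closed-form generating functions built from $P(q)$, which isolates this same boundary term as a single rational-function identity to be verified.
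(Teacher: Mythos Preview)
Your approach---expanding both sides as sums of $p(\cdot)$ values, or equivalently comparing closed-form generating functions built from $P(q)$---is exactly what the paper does: it writes the generating function of $S_{s(k)}(n)$ as $P(q)\sum_{j\geqslant1}q^{kj+s}(1-q^{kj+s})$, sums the geometric series, and factors out $\frac{q^{2k}}{1-q^{2k}}\,P(q)=\sum_n S_{2k}(n)q^n$ to leave the Laurent polynomial $q^{-(k-s)}+q^s-q^{2s}$.

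But there is a genuine gap precisely where you flagged the crux. Under your reading (for $1\leqslant s<k$ the residue class includes the part $s$ itself, so $j=s+ik$ with $i\geqslant0$), the identity is \emph{false}, not merely awaiting bookkeeping: your left side equals $\sum_{i\geqslant0}p(n-s-ik)-\sum_{i\geqslant0}p(n-2s-2ik)$, while your right side is the same two sums with $i\geqslant1$, so they differ by $p(n-s)-p(n-2s)$, which does not vanish in general. For instance $k=2$, $s=1$, $n=3$ gives $S_{1(2)}(3)=2$ (the unique $1$ in $(2,1)$ and the unique $3$ in $(3)$), but $S_4(4)+S_4(2)-S_4(1)=1$. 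The paper resolves this not by cancellation but by its indexing convention: the sum over the residue class runs over $kj+s$ with $j\geqslant1$, so the smallest part counted is $k+s$, not $s$. With that shift your $i\geqslant0$ becomes $i\geqslant1$ on the left and the term-by-term match (equivalently, the rational-function identity $q^s\frac{q^k}{1-q^k}-q^{2s}\frac{q^{2k}}{1-q^{2k}}=\frac{q^{2k}}{1-q^{2k}}\bigl(q^{s-k}+q^s-q^{2s}\bigr)$) goes through immediately; your $s=0$ sanity check works only because in that case the two conventions coincide.
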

\begin{proof}
Approaching as in \cite{AD}, assuming $n\geqslant1$, $k\geqslant1$, the generating function of $S_{s(k)}(n)$ is given by
\begin{align*}
\sum_{n\geqslant1}S_{2k}(n)q^n = \sum_{j=1}^{\infty} \frac{q^{kj+s}}{\prod_{n \neq kj+s}(1-q^n)},
\end{align*}
from which, we have
\begin{align*}
\sum_{n\geqslant1}S_{2k}(n)q^n &= \frac{1}{\prod_{n}(1-q^n)} \sum_{j=1}^{\infty}q^{kj+s}(1-q^{kj+s}) \\
&= \frac{1}{\prod_{n}(1-q^n)} \left( q^s \frac{q^k}{1-q^k} - q^{2s} \frac{q^{2k}}{1-q^{2k}} \right) \\
&= \frac{q^{2k}}{1-q^{2k}} \frac{1}{\prod_{n}(1-q^n)} \left(q^{-(k-s)} + q^{s} - q^{2s} \right) \\
&= \left(\sum_{n\geqslant1}S_{2k}(n)q^n\right)\left(q^{-(k-s)} + q^{s} - q^{2s} \right).
\end{align*}
Comparing the coefficients of $q^n$ from both sides, we arrive at the desired result.
\end{proof}

The following theorem presents the case for $n$-color partitions in the spirit of Theorem \ref{thm2}. This shows how functions of the form $T_{k}^{r}(n)$ can be useful in such counting theorems.
\begin{theorem} \label{special-case-T}
In each $n$-color partitions of $n$, we count the number of times a part divisible by k appears uniquely, then sum these numbers over all the $n$-color partitions of $n$. Let us multiply this sum by $\frac{1}{k}$ and call it $T_{|k}(n)$. Then
$$T_{|k}(n) = \sum_{j \geqslant 0} \Big(T_{2k}^1(n-(2j-1)k) + T_{2k}^1(n-2jk) + T_{2k}^1(n-(2j+1)k)\Big).$$
\end{theorem}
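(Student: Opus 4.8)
The plan is to imitate the Erd\H{o}s-style marking argument used for Theorem \ref{thm2} and its generalization, computing the generating function of $T_{|k}(n)$ directly and then matching it against the series supplied by Lemma \ref{lemma}. First I would build the generating function of the unnormalized quantity $k\,T_{|k}(n)$. Fix a size $kj$ and a color $c$ with $1\leqslant c\leqslant kj$. In the product $\prod_{m=1}^{\infty}\frac{1}{(1-q^m)^m}$ the single factor recording the colored part $(kj)_c$ is $(1-q^{kj})^{-1}=\sum_{t\geqslant0}q^{kjt}$; requiring that this colored part appear exactly once (``uniquely'') keeps only the term $t=1$, i.e.\ replaces that factor by $q^{kj}$, which amounts to multiplying the whole product by $q^{kj}(1-q^{kj})$. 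Summing over the $kj$ available colors of size $kj$ contributes a factor $kj$, and summing over $j\geqslant1$ then gives
\begin{align*}
\sum_{n}k\,T_{|k}(n)q^n = k\left(\prod_{m=1}^{\infty}\frac{1}{(1-q^m)^m}\right)\sum_{j\geqslant1} j\,q^{kj}\left(1-q^{kj}\right).
\end{align*}
Dividing by $k$ replaces the weight $kj$ by $j$, which is precisely what the $\frac1k$ normalization in the statement is designed to produce.

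The next step is the elementary closed form $\sum_{j\geqslant1} j\,q^{kj}(1-q^{kj}) = \frac{q^k}{(1-q^k)^2}-\frac{q^{2k}}{(1-q^{2k})^2}$, obtained from $\sum_{j\geqslant1}jx^j = x/(1-x)^2$ with $x=q^k$ and with $x=q^{2k}$. Using $(1-q^{2k})^2=(1-q^k)^2(1+q^k)^2$ together with $(1+q^k)^2-q^k = 1+q^k+q^{2k}$, this collapses to
\begin{align*}
\sum_{n} T_{|k}(n)q^n = \frac{q^k\bigl(1+q^k+q^{2k}\bigr)}{(1-q^{2k})^2}\prod_{m=1}^{\infty}\frac{1}{(1-q^m)^m}.
\end{align*}

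For the right-hand side I would invoke Lemma \ref{lemma} with $r=1$ on the part $2k$, which gives $\sum_m T_{2k}^1(m)q^m = \frac{q^{2k}}{1-q^{2k}}\prod_{m=1}^{\infty}\frac{1}{(1-q^m)^m}$. Since the argument shift $n\mapsto n-a$ corresponds to multiplication by $q^a$, the three geometric sums $\sum_{j\geqslant0}q^{(2j-1)k}$, $\sum_{j\geqslant0}q^{2jk}$, and $\sum_{j\geqslant0}q^{(2j+1)k}$ combine to $(q^{-k}+1+q^k)/(1-q^{2k})$, so the generating function of the right-hand side equals $\frac{(q^{-k}+1+q^k)q^{2k}}{(1-q^{2k})^2}\prod_{m=1}^{\infty}\frac{1}{(1-q^m)^m}$. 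Because $(q^{-k}+1+q^k)q^{2k}=q^k(1+q^k+q^{2k})$, this is identical to the display above, and equating coefficients of $q^n$ yields the theorem.

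The main obstacle is the bookkeeping on the left side: one must be confident that ``appears uniquely'' is modeled by exactly one copy of a \emph{single} colored part (replacing $(1-q^{kj})^{-1}$ by $q^{kj}$, exactly as in the generalization above), and that summing over the $kj$ colors of size $kj$ is the mechanism that manufactures the arithmetic weight $j$ after dividing by $k$. A secondary, purely formal point is that the $j=0$ term on the right produces the negative power $q^{-k}$; this is harmless, since after multiplication by $\frac{q^{2k}}{1-q^{2k}}\prod_{m=1}^{\infty}\frac{1}{(1-q^m)^m}$ all exponents are nonnegative and, for each fixed $n$, only finitely many $j$ contribute.
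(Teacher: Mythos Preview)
Your proposal is correct and follows essentially the same route as the paper: both compute the generating function of $T_{|k}(n)$ by marking a single colored part of size $kj$ as appearing exactly once, sum over colors and over $j$ to obtain $\bigl(\prod_m (1-q^m)^{-m}\bigr)\sum_{j\ge1} j\,q^{kj}(1-q^{kj})$, evaluate this via $\sum jx^j=x/(1-x)^2$ to reach $\dfrac{q^{2k}}{(1-q^{2k})^2}(q^{-k}+1+q^k)\prod_m(1-q^m)^{-m}$, and then identify the factor $\dfrac{q^{2k}}{1-q^{2k}}\prod_m(1-q^m)^{-m}$ with $\sum_n T_{2k}^1(n)q^n$ from Lemma~\ref{lemma}. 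Your extra remarks on the algebraic simplification $(1+q^k)^2-q^k=1+q^k+q^{2k}$ and on the harmless negative power at $j=0$ are welcome clarifications but do not alter the argument.
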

\begin{proof}
The generating function of $T_{|k}(n)$ is given by
$$k\sum_{n}T_{|k}(n)q^n = \sum_{j=1}^{\infty} \frac{kjq^{kj}}{(1-q^{kj})^{kj-1} \prod_{n \neq kj}(1-q^n)^n}.$$
Hence,
\begin{align}\label{T2K}
\sum_{n}T_{|k}(n)q^n &= \frac{\sum_{j=1}^{\infty} jq^{kj} (1-q^{kj})}{\prod_{n}(1-q^n)^n} \nonumber\\
&= \frac{1}{\prod_{n}(1-q^n)^n} \Big( \sum_{j=1}^{\infty} jq^{kj} - \sum_{j=1}^{\infty} jq^{2kj} \Big) \nonumber\\
&= \frac{1}{\prod_{n}(1-q^n)^n} \left( \frac{q^k}{(1-q^k)^2} - \frac{q^{2k}}{(1-q^{2k})^2} \right) \nonumber\\
&= \frac{1}{\prod_{n}(1-q^n)^n} \frac{q^{2k}}{(1-q^{2k})^2} \Big( q^{-k} + 1 + q^{k} \Big) \nonumber\\
&= \Big( \sum_{n}T_{2k}^1(n)q^n \Big) \Big( \sum_{j=1}^{\infty}( q^{(2j-1)k} + q^{2jk} + q^{(2j+1)k} ) \Big).
\end{align}
Comparing the coefficients of $q^n$ from both sides of \eqref{T2K}, we obtain the desired result.
\end{proof}

In fact, we can also generalize this theorem to any part congruent to $s(\textup{mod}~k)$ as follows.
\begin{theorem} \label{general-case}
In each $n$-color partitions of $n$ we count the number of times a part congruent to $s(\textup{mod}~k)$  appears uniquely for some $s$ satisfying $0 \leqslant s < k$, then sum these numbers over all the $n$-color partitions of $n$. Let us call this $T_{s(k)}(n)$. Then,
\begin{align*}
T_{s(k)}(n) = &(k+s)(T_{2k}^1(n+k-s) - T_{2k}^1(n-2s)) + sT_{2k}^1(n-s) \\
&+ 2k \sum_{l \geqslant 1}T_{2k}^1(n+k-s-kl) - k \sum_{l \geqslant 1}T_{2k}^1(n-2s-2kl).
\end{align*}
\end{theorem}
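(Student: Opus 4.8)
The plan is to run the Erd\H{o}s-style marking argument used by Andrews and Deutsch, exactly as in the proofs of Theorem~\ref{special-case-T} and the preceding generalization of Theorem~\ref{thm2}, but now for parts lying in the residue class $s$ modulo $k$. Write $P(q)=\prod_{n\geqslant1}(1-q^{n})^{-n}$ for the $n$-colour (equivalently plane-partition) generating function. First I would assemble the generating function of $T_{s(k)}(n)$ value by value: for a fixed part $p\equiv s\pmod k$, which carries $p$ colours, the number of its uniquely appearing coloured copies, summed over all $n$-colour partitions of all sizes, is produced by selecting which of the $p$ colours occurs exactly once (a factor $p\,q^{p}$), letting the other $p-1$ colours of $p$ occur freely (a factor $(1-q^{p})^{-(p-1)}$), and letting every value $\neq p$ occur freely. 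Putting $p=kj+s$ and summing over $j\geqslant1$ — the same indexing as in the companion ordinary-partition theorem, in which the representative $s$ itself is not counted — gives
\[
\sum_{n} T_{s(k)}(n)\,q^{n}=\sum_{j=1}^{\infty}\frac{(kj+s)\,q^{kj+s}}{(1-q^{kj+s})^{\,kj+s-1}\prod_{n\neq kj+s}(1-q^{n})^{n}}.
\]

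Since $(1-q^{p})^{-(p-1)}=(1-q^{p})\,(1-q^{p})^{-p}$, each summand collapses to $P(q)\,(kj+s)\,q^{kj+s}(1-q^{kj+s})$, so that
\[
\sum_{n} T_{s(k)}(n)\,q^{n}=P(q)\sum_{j\geqslant1}(kj+s)\bigl(q^{kj+s}-q^{2(kj+s)}\bigr).
\]
I would then evaluate the inner sum by splitting off its $k$-linear and $s$-linear parts, each of which is a geometric series or its first derivative; this produces
\[
k\!\left[\frac{q^{s+k}}{(1-q^{k})^{2}}-\frac{q^{2s+2k}}{(1-q^{2k})^{2}}\right]+s\!\left[\frac{q^{s+k}}{1-q^{k}}-\frac{q^{2s+2k}}{1-q^{2k}}\right].
\]

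The crux is to re-express the product of $P(q)$ with this bracket through $G(q):=\sum_{m}T^{1}_{2k}(m)\,q^{m}=\frac{q^{2k}}{1-q^{2k}}\,P(q)$, which is Lemma~\ref{lemma} with $r=1$ and $k$ replaced by $2k$. Substituting $P(q)=\frac{1-q^{2k}}{q^{2k}}G(q)$ and using $\frac{1}{1-q^{k}}=\frac{1+q^{k}}{1-q^{2k}}$ and $\frac{1}{(1-q^{k})^{2}}=\frac{(1+q^{k})^{2}}{(1-q^{2k})^{2}}$ turns every denominator into a power of $1-q^{2k}$; after the $(1-q^{2k})$ factor of $P(q)$ cancels, the expression simplifies to
\[
G(q)\left[(k+s)\bigl(q^{s-k}-q^{2s}\bigr)+s\,q^{s}+\frac{2k\,q^{s}}{1-q^{k}}-\frac{k\,q^{2s+2k}}{1-q^{2k}}\right],
\]
an equality most cleanly confirmed by comparing the coefficients of $q^{s}$ and of $q^{2s}$ (viewing everything as a series in $q^{k}$). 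Reading off the coefficient of $q^{n}$ then finishes the proof: $q^{a}G(q)$ contributes $T^{1}_{2k}(n-a)$, the factor $\frac{q^{s}}{1-q^{k}}=\sum_{l\geqslant1}q^{\,s-k+kl}$ produces the tail $\sum_{l\geqslant1}T^{1}_{2k}(n+k-s-kl)$, and $\frac{q^{2s+2k}}{1-q^{2k}}=\sum_{l\geqslant1}q^{\,2s+2kl}$ produces $\sum_{l\geqslant1}T^{1}_{2k}(n-2s-2kl)$, giving precisely the stated identity.

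I expect the main obstacle to be the bookkeeping in the collapse of the mixed $(1-q^{k})$ and $(1-q^{2k})$ denominators of the third display into the single combination of $G(q)$-shifts in the fourth — in particular, pairing the coefficients $k+s$, $s$, $2k$, and $-k$ with the correct shifts of $T^{1}_{2k}$. A secondary but genuine point of care is the lower limit of summation: the series must begin at $j=1$, so that the part equal to $s$ is omitted, matching both the divisibility case $s=0$ of Theorem~\ref{special-case-T} and the ordinary-partition generalization of Theorem~\ref{thm2}; including the $j=0$ term would add a spurious $s\,P(q)\bigl(q^{s}-q^{2s}\bigr)$ and destroy the clean identity, so reconciling this indexing with the verbal description ``congruent to $s\pmod k$'' is worth an explicit remark.
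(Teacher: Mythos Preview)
Your proposal is correct and follows essentially the same route as the paper: you set up the same generating function $\sum_{j\geqslant1}(kj+s)q^{kj+s}(1-q^{kj+s})\,P(q)$, split it into the same four geometric-type sums, factor out $\frac{q^{2k}}{1-q^{2k}}P(q)=\sum_{m}T^{1}_{2k}(m)q^{m}$, and then read off coefficients. Your final bracketed expression $(k+s)(q^{s-k}-q^{2s})+sq^{s}+\frac{2kq^{s}}{1-q^{k}}-\frac{kq^{2s+2k}}{1-q^{2k}}$ is exactly what the paper obtains after expanding $\frac{1+q^{k}}{1-q^{k}}=1+2\sum_{l\geqslant1}q^{kl}$ and collecting terms, and your remark about the summation starting at $j=1$ matches the paper's convention.
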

\begin{proof}
The generating function of $T_{s(k)}(n)$ is given by
\begin{align*}
\sum_{n\geqslant0} T_{s(k)}(n) q^n&
= \sum_{j\geqslant1} \frac{(kj+s)q^{kj+s}}{(1-q^{kj+s})^{kj+s-1} \prod_{n \neq kj+s} (1-q^n)^n}.
\end{align*}
Therefore,
\begin{align*}
&\sum_{n\geqslant0} T_{s(k)}(n) q^n \\
&= \sum_{j\geqslant1} \frac{(kj+s)q^{kj+s}}{(1-q^{kj+s})^{kj+s-1} \prod_{n \neq kj+s} (1-q^n)^n} \\
&= \sum_{j\geqslant1} \frac{(kj+s)q^{kj+s}(1-q^{kj+s})}{\prod_{n \geqslant 1} (1-q^n)^n} \\
&= \frac{1}{\prod_{n \geqslant 1} (1-q^n)^n} \left(kq^s \sum_{j\geqslant1}jq^{kj} + sq^s\sum_{j\geqslant1}q^{kj} - kq^{2s}\sum_{j\geqslant1}jq^{2kj} - sq^{2s}\sum_{j\geqslant1}q^{2kj} \right) \\
&= \frac{1}{\prod_{n \geqslant 1} (1-q^n)^n} \left(kq^s \frac{q^{k}}{(1-q^{k})^2} + sq^s \frac{q^{k}}{1-q^{k}} - kq^{2s} \frac{q^{2k}}{(1-q^{2k})^2} - sq^{2s} \frac{q^{2k}}{1-q^{2k}} \right) \\
&= \frac{q^{2k}}{1-q^{2k}} \frac{1}{\prod_{n \geqslant 1} (1-q^n)^n} \left( kq^{-(k-s)} \frac{1+q^k}{1-q^k} + sq^{-(k-s)}(1+q^k) - kq^{2s} \frac{1}{1-q^{2k}} - sq^{2s} \right)\\
&= \left( \sum_{n\geqslant0}T_{2k}^1(n)q^n \right) \left( kq^{-(k-s)} \left(1+2\sum_{l\geqslant1}q^{kl}\right) + sq^{-(k-s)}(1+q^k) - kq^{2s}\left(l+\sum_{l\geqslant1}q^{2kl} \right) \right) \\
&= \left( \sum_{n\geqslant0}T_{2k}^1(n)q^n \right) \left( kq^{-(k-s)} + sq^{-(k-s) + sq^s - kq^{2s} - sq^{2s} + 2kq^{-(k-s)}}\sum_{l\geqslant1}q^{kl} - kq^{2s}\sum_{l\geqslant1}q^{2kl}  \right)\\
&=  (k+s)\sum_{n}T_{2k}^1(n)q^{n-k+s} + s\sum_{n}T_{2k}^1(n)q^{n+s} 
- (k+s)\sum_{n}T_{2k}^1(n)q^{n+2s} \\
&\quad + 2kq^{-(k-s)} \sum_{n} \left( \sum_{l\geqslant1}T_{2k}^1(n-kl) \right)q^n - kq^{2s} \sum_{n} \left( \sum_{l\geqslant1}T_{2k}^1(n-2kl) \right)q^n
\end{align*}
\begin{align*}
&= (k+s)\sum_{n}T_{2k}^1(n+k-s)q^{n} + s\sum_{n}T_{2k}^1(n-s)q^{n} - (k+s)\sum_{n}T_{2k}^1(n-2s)q^{n} \\
&\quad+ 2k \sum_{n} \left( \sum_{l\geqslant1}T_{2k}^1(n+k-s-kl) \right)q^n - k \sum_{n} \left( \sum_{l\geqslant1}T_{2k}^1(n-2s-2kl) \right)q^n. \\
\end{align*}
Comparing the coefficient of $q^n$ from both sides, we arrive at the desired result.
\end{proof}

It is to be noted that Theorem \ref{special-case-T} can be obtained from Theorem \ref{general-case} by putting $s=0$, taking $T_{|k}(n) = \frac{1}{k}T_{0(k)}(n)$, and then rearranging the terms.

\section*{\bf Acknowledgement}
 The first  author was partially supported by an ISPIRE Fellowship for Doctoral Research,
 DST, Government of India. The second author was partially supported by Grant no. MTR/2018/ 000157/MS of Science \& Engineering Research Board (SERB), DST, Government of India, under MATRICS Scheme. The authors acknowledge the respective funding agencies.

\end{document}